\newtheorem{thm}{Theorem}[section]
\newtheorem{lem}[thm]{Lemma}
\newtheorem{prop}[thm]{Proposition}
\theoremstyle{definition}
\newtheorem{dfn}[thm]{Definition}
\newtheorem{dfns}[thm]{Definitions}
\theoremstyle{remark}
\newtheorem{rmk}[thm]{Remark}
\newtheorem{example}[thm]{Example}
\newtheorem{examples}[thm]{Examples}
\newcommand{\Depth}{2}
\newcommand{\Height}{2}
\newcommand{\Width}{2}
\newcommand{\Obj}{\operatorname{Obj}}
\newcommand{\Sk}{\operatorname{Sk}}
\newcommand{\Gg}{\mathcal{G}}
\newcommand{\Cc}{\mathcal{C}}
\newcommand{\Pp}{\mathcal{P}}
\newcommand{\ZZ}{\mathbb{Z}}
\newcommand{\NN}{\mathbb{N}}
\newcommand{\FF}{\mathbb{F}}
\def\IoIIdimdots(#1/#2/#3,#4){\node at (#1,#4) {$.$};\node at (#2,#4) {$.$};\node at (#3,#4) {$.$};}
\def\IIoIIdimdots(#1,#2/#3/#4){\node at (#1,#2) {$.$};\node at (#1,#3) {$.$};\node at (#1,#4) {$.$};}
\def\IoIIIdimdots(#1/#2/#3,#4,#5){\node at (#1,#4,#5) {$.$};\node at (#2,#4,#5) {$.$};\node at (#3,#4,#5) {$.$};}
\def\IIoIIIdimdots(#1,#2/#3/#4,#5){\node at (#1,#2,#5) {$.$};\node at (#1,#3,#5) {$.$};\node at (#1,#4,#5) {$.$};}
\def\IIIoIIIdimdots(#1,#2,#3/#4/#5){\node at (#1,#2,#3) {$.$};\node at (#1,#2,#4) {$.$};\node at (#1,#2,#5) {$.$};}
\title{Computing the fundamental group of a higher-rank graph}
\author{Sooran Kang}
\address{Sooran Kang, College of General Education, Chung-Ang University, Seoul 06974, Republic of Korea }
\email{sooran09@cau.ac.kr}
 \author{David Pask}
 \address{David Pask \\ School of Mathematics and
Applied Statistics  \\
The University of Wollongong\\
NSW  2522\\
AUSTRALIA} \email{dpask@uow.edu.au}
 \author{Samuel B.G. Webster}
 \address{Samuel B.G. Webster, Independent Hospital Pricing Authority, Level 6, 1 Oxford Street, Sydney, NSW 2000, Australia}
 \email{sbgwebster@gmail.com}
\thanks{This research was supported by the National Research Foundation of Korea (NRF) grant funded by the Korea government (MSIT), No. NRF-2020R1F1A101076072 and the Australian Reseach Council.} 
\keywords{Higher-rank graph,  fundamental group}
\subjclass[2010]{Primary: {14F35}; Secondary: {18D99}}
\date{\today}
\begin{document}

\begin{abstract}
We compute a presentation of the fundamental group of a higher-rank graph using a coloured graph description of higher-rank graphs developed by the third author. We compute the fundamental groups of several examples from the literature. Our results fit naturally into the suite of known geometrical results about $k$-graphs when we show that the abelianisation of fundamental group is the homology group. We end with a calculation which gives a non-standard presentation of the fundamental group of the Klein bottle to the one normally found in the literature.
\end{abstract}

\maketitle

\section{Introduction}

\noindent
Higher-rank graphs, or $k$-graphs, were introduced in \cite{KP1} as a graphical approach to the higher-rank Cuntz-Krieger algebras introduced by Robertson and Steger in \cite{RS}. Since then $k$-graphs have been studied by many authors from several points of view (see \cite{A-PCaHR, CKSS, CFaH,  E, FGJKP2, aHLRS, RSY1, Y1}, for example). The motivation for this paper comes from the recent developments of \cite{KKQS,KPS3,KPS4,KPS5} and \cite{PQR1} where the geometric nature of $k$-graphs has been investigated and then used to construct new families of twisted $k$-graph $C^*$-algebras. In \cite{KKQS} it was shown that a $k$-graph may be realised as a topological space in a way which preserves homotopy type. It is therefore of interest to provide a facility to computate the fundamental group of a $k$-graph. This is our main purpose. 

For $1$-graphs, or directed graphs, the result is classical and easy to compute, as the universal covering object is a tree (see \cite{St}). For $k$-graphs the situation is more complicated, the fundamental group has relations arising from the factorisation property and so they are not free groups. Key to our analysis is the use of coloured graphs to describe the structure of a $k$-graph (see \cite{HRSW}). The $1$-skeleton $\Sk_\Lambda$ of a $k$-graph $\Lambda$ is a $k$-coloured graph which, together with some additional combinatorial data $\mathcal{C}$, which
encodes the natural quotient structure of $\Lambda$.

 After introducing our conventions, we describe the relationship between $k$-graphs and $k$-coloured graphs in sections~\ref{sec:prelim1} and~\ref{sec:prelim2}. In section~\ref{sec:pi1} we give a presentation of the fundamental group of a $k$-graph in terms of the fundamental group of its $1$-skeleton (see Theorem~\ref{thm:fgc}). To illustrate the efficacy of our result we give several computations in Examples~\ref{ex:kps_ex}. Finally, in section~\ref{sec:h1} we show that the abelianisation of the fundamental group of a $k$-graph agrees with its homology as defined in \cite{KPS3}. Then, in Example~\ref{ex:isklein}, we compute the fundamental group of a $2$-graph from the Klein bottle example in \cite[Example 3.13]{KKQS} which reveals a non standard presentation of this group.

\section{Conventions}

For $k \ge 1$ let $\mathbb{N}^k$ denote the monoid of $k$-tuples of natural numbers under addition, and denote the canonical generators by $e_1,e_2 , \ldots, e_k$. 
For $m \in \NN^k$ we write $m = \sum_{i=1}^k m_i e_i$ then for $m,n \in \NN^k$ we say that $m \le n$ if and only if $m_i \le n_i$ for $i=1, \ldots , k$.

A \emph{directed graph} $E$ is a quadruple $(E^0,E^1,r_E,s_E)$, where $E^0$ is the set of \emph{vertices}, $E^1$ is the set of \emph{edges}, and $r_E,s_E:E^1 \to E^0$ are \emph{range} and \emph{source} maps, giving a direction to each edge (if there is no chance of confusion we will drop the subscripts). We follow the conventions of~\cite{Raeburn2005} which are suited to the categorical setting we wish to pursue: a \emph{path of length n} is a sequence $\mu=\mu_1\mu_2\cdots\mu_n$ of edges such that $s(\mu_i)=r(\mu_{i+1})$ for $1\le i\le n-1$. We denote by $E^n$ the set of all paths of length $n$, and define $E^*=\bigcup_{n\in\mathbb{N}}E^n$. We extend $r$ and $s$ to $E^*$ by setting $r(\mu)=r(\mu_1)$ and $s(\mu)=s(\mu_n)$.

To align with the established literature on the fundamental groupoid of a $k$-graph, the following definitions are taken from \cite[Definition 5.1]{PQR1}: Let $E$ be a directed graph. For each $e \in E^1$ we introduce an \emph{inverse} edge $e^{-1}$ with
$s ( e^{-1} ) = r ( e )$ and $r ( e^{-1} ) = s ( e)$.
 Let $E^{-1} = \{ e^{-1} : e \in E^1 \}$ and $E^u=E^1\cup E^{-1}$, then $E^+ = ( E^0 , E^u , r , s)$ is a directed graph called the \textit{augmented graph of $E$}.  Let $\mathcal{P}(E^+)$ be the path category of $E^+$. If we set $(e^{-1})^{-1}=e$ then $e^{-1} \in \mathcal{P} (E^+)$ for any $e \in E^u$ and by extension $\lambda^{-1} = \lambda_n^{-1} \cdots \lambda_1^{-1} \in \mathcal{P} ( E^+ )$ for any $\lambda = \lambda_1 \cdots \lambda_n \in \mathcal{P} ( E^+ )$. Elements of $\mathcal{P} ( E^+ )$ are called \textit{undirected paths in $E$}, and elements of $\mathcal{P} ( E^+ )$ which do not contain $e e^{-1}$ for any $e \in E^u$ are called \emph{reduced} undirected paths in $E$ (vertices are reduced paths). 
 
 Let $E$ be a directed graph then $E$ is \textit{connected} if for every $u,v \in E^0$ there is $\alpha \in \mathcal{P} ( E^+ )$ with $u = s ( \alpha )$ and $v = r ( \alpha )$. A tree $T$ is a connected directed graph such that the only reduced $\alpha \in \mathcal{P} ( T^+ )$ with the same source and range are vertices. Let $E$ be a directed graph with subgraph $T$ which is a tree, then $T$ is a \textit{maximal spanning tree} if $T^0 = E^0$. Every connected directed graph has a (not necessarily unique) maximal spanning tree (see \cite[\S 2.1.5]{St}).
 
 \begin{rmk}\label{rmk: fundamental group ntn} Fix a maximal spanning tree $T$ of a connected directed graph $E$ and $v \in E^0$. For each $w \in E^0$ there is a unique reduced path $\eta_w$ in the augmented graph $T^+$ from $v$ to $w$, which is an element of the fundamental groupoid $\mathcal{G} ( E )$. For $\lambda \in \Pp ( E^+ )$ define $\zeta_\lambda = \eta_{r(\lambda)}^{-1} \lambda \eta_{s(\lambda)} \in v \mathcal{P} (E^+) v$. Note that $\zeta_\lambda^{-1} = \zeta_{\lambda^{-1}}$. 
\end{rmk}

\section{Coloured graphs} \label{sec:prelim1}
For $k \ge 1$, a \emph{$k$-coloured graph} $E$ is a directed graph along with a \emph{colour map} $c_E:E^1 \to \{c_1,\dots,c_k\}$. By considering $\{c_1,\dots , c_k\}$ as generators of the free group  $\mathbb{F}_k$, we may extend $c_E : E^* \setminus E^0 \to \mathbb{F}_k^+$ by $c_E (\mu_1 \cdots \mu_n)=c_E (\mu_1)c_E (\mu_2) \cdots c_E (\mu_n)$. We will drop the subscript from $c_E$ if there is no risk of confusion. For $k$-coloured directed graphs $E$ and $F$, a \emph{coloured-graph morphism} $\phi: F \to E$ is a graph morphism satisfying $c_E \circ \phi^1 = c_F$.

For $2$-coloured graphs, the convention is to draw edges with colour $c_1$ blue (or solid) and edges with colour $c_2$ red (or dashed).

\noindent\begin{minipage}[c]{0.78\linewidth}
\begin{example}\label{ex:model coloured graphs} 
For $k \ge 1$ and $m\in \mathbb{N}^k$, the $k$-coloured graph $E_{k,m}$ is defined by 
$E_{k,m}^0=\{n\in\mathbb{N}^k:0\le n\le m\}$, $E_{k,m}^1=\{\varepsilon_i^n : n,n+e_i\in E_{k,m}^0\}$, with $r(\varepsilon_i^n)=n$, $s(\varepsilon_i^n)=n+e_i$ and $c_{E}(\varepsilon_i^n)=c_i$. 
The 2-coloured graph $E_{2,e_1+e_2}$ is used often, and is depicted to the right. 
\end{example}
\end{minipage}
\begin{minipage}[c]{0.2\linewidth}
\[
\begin{tikzpicture}[>=stealth,yscale=1.5,xscale=2]
\node (00) at (0,0) {$0$};
\node (10) at (1,0) {$e_1$}
	edge[->,blue] node[auto,black] {$\varepsilon_1^{0}$} (00);
\node (01) at (0,1) {$e_2$}
	edge[->,red,dashed] node[auto,black,swap] {$\varepsilon_2^{0}$} (00);
\node (11) at (1,1) {$e_1+e_2$}
	edge[->,blue] node[auto,black,swap] {$\varepsilon_1^{e_2}$} (01)
	edge[->,red,dashed] node[auto,black] {$\varepsilon_2^{e_1}$} (10);
\end{tikzpicture}
\]
\end{minipage}

\noindent
Let $E$ be a $k$-coloured graph and $i \neq j \leq k$. A coloured graph morphism $\phi: E_{k,e_i+e_j} \to E$ is called an \emph{square} in $E$. One may represent a square $\phi$ as a labelled version of $E_{k,e_i+e_j}$. For instance the $2$-coloured graph below on the left has only one square $\phi$, shown to its right, given by $\phi(n) = v$ for all $n \in E_{2,{e_1+e_2}}^0$, $\phi( \varepsilon_1^0) = \phi( \varepsilon_1^{e_2}) = e$ and $\phi( \varepsilon_2^0 ) = \phi( \varepsilon_2^{e_1} ) = f$. 
\[
\begin{tikzpicture}[>=stealth,scale=1.2]
\node[inner sep=0.8pt] (v) at (0,0) {$v$}
	edge[loop,blue,->,in=45,out=-45,looseness=15] node[auto,black,swap] {$e$} (v)
	edge[loop,red,dashed,->,in=135,out=-135,looseness=15] node[auto,black] {$f$} (v);
\begin{scope}[xshift=2.5cm,yshift=-0.5cm]
\node at (0.5,0.5) {$\phi$};
\node[inner sep=0.8pt] (00) at (0,0) {$v$};
\node[inner sep=0.8pt] (10) at (1,0) {$v$}
	edge[->,blue] node[auto,black] {$e$} (00);
\node[inner sep=0.8pt] (01) at (0,1) {$v$}
	edge[->,red,dashed] node[auto,black,swap] {$f$} (00);
\node[inner sep=0.8pt] (11) at (1,1) {$v$}
	edge[->,blue] node[auto,black,swap] {$e$} (01)
	edge[->,red,dashed] node[auto,black] {$f$} (10);
\end{scope}
\end{tikzpicture}
\]

\noindent
$\Cc_E = \{ \phi : E_{k,e_i+e_j} \to E : 1 \le i \neq j \le k  \}$ denotes the set of squares in a $k$-coloured graph $E$.

A collection of squares $\Cc$ in a $k$-coloured graph $E$ is called \emph{complete} if for each $i \neq j \leq k$ and $c_ic_j$-coloured path $fg \in E^2$, there exists a unique $\phi \in \Cc$ such that $\phi(\varepsilon_i^{0}) = f$ and $\phi(\varepsilon_j^{e_i}) = g$. In this case, uniqueness of $\phi$ gives a unique $c_jc_i$-coloured path $g'f'$ with $g' = \phi(\varepsilon_j^{0})$ and $f' = \phi(\varepsilon_i^{e_j})$. We will write $fg \sim_\Cc g'f'$ and refer to elements $(fg,g'f')$ of this relation as \emph{commuting squares}. 

\begin{example} \label{kgex2} 
For $n \ge 1$ define $\underline{n} = \{ 1 , \ldots , n \}$. For $m,n \ge 1$, let  $\theta : \underline{m} \times \underline{n} \to \underline{m} \times \underline{n}$ be a bijection. Let $E_\theta$ be the $2$-coloured graph with $E_\theta ^0= \{v\}$, $E_\theta^1 = \{f_1 , \ldots , f_m , g_1 , \ldots , g_n\}$, and colouring map $c : E^1_{\theta} \to \{c_1,c_2\}$ by $c ( f_i ) = c_1$ for $i \in \underline{m} $ and $c (g_j ) = c_2$ for $j \in \underline{n}$. For each $(i,j) \in \underline{m} \times \underline{n}$, define $\phi_{(i,j)} : E_{2,e_1+e_2} \to E_\theta$ by
\[
\phi_{(i,j)} ( \varepsilon_1^{0} ) = f_i, \ \ \phi_{(i,j)} (\varepsilon_1^{e_2} ) = f_{i'},\ \
\phi_{(i,j)} ( \varepsilon_2^{0} )= g_{j'},  \text{ and } \phi_{(i,j)} ( \varepsilon_2^{e_1} ) = g_j , \text{ where $\theta ( i , j ) = ( i' , j' ).$}
\]

\noindent As $\theta$ is a bijection $\mathcal{C}_{E_\theta} = \{\phi_{(i,j)} : (i,j) \in \underline{m} \times \underline{n}\}$
is a complete collection of squares. 
\end{example}

\section{Higher-rank graphs} \label{sec:prelim2}

A \emph{$k$-graph} (or a \emph{higher-rank graph}) is a countable category $\Lambda$ with a \emph{degree} functor $d:\Lambda \to \NN^k$ satisfying the \emph{factorisation property}: if $\lambda\in\Lambda$ and $m,n\in\NN^k$ are such that $d(\lambda)=m+n$, then there are unique $\mu,\nu \in \Lambda$ with $d(\mu)=m$, $d(\nu)=n$ and $\lambda=\mu\nu$. 

Given $m \in \NN^k$ we define $\Lambda^m := d^{-1}(m)$. Given $v,w \in \Lambda^0$ and $F \subseteq \Lambda$ define $vF := r^{-1}(v) \cap F$, $Fw := s^{-1}(w) \cap F$, and $vFw := vF \cap Fw$. The factorisation property allows us to identify $\Lambda^0$ with $\Obj(\Lambda)$, and we call its elements \emph{vertices}.

By the factorisation property, for each $\lambda\in\Lambda$ and $m \leq n \leq d(\lambda)$, we may write $\lambda=\lambda'\lambda''\lambda'''$, where $d(\lambda')=m, d(\lambda'') = n-m$ and $d(\lambda'')=d(\lambda)-n$; then $\lambda (m,n) :=\lambda''$. For more information about $k$-graphs see \cite{HRSW,KP1,RSY1} for example.

\begin{examples} \label{ex:kg}
\begin{enumerate}[(a)]
\item Let $E$ be a directed graph. The collection $E^*$ of finite paths in $E$ forms a category, called the \emph{path category} of $E$, denoted by $\mathcal{P}(E)$. The map $d : \mathcal{P} (E) \to \NN$ defined by $d(\mu)=n$ if and only if $\mu \in E^n$ is a functor which satisfies the factorisation property, hence $( \mathcal{P}(E),d)$ is a $1$-graph. It turns out that every $1$-graph arises in this way (see \cite[Example 1.3]{KP1}). 
\item For $k \ge 1$; let $\Delta_k = \{ (m,n) : m,n \in \mathbb{Z}^k : m \le n \}$. With structure maps $r(m,n)=m , s (m,n)=n$, so
that $( \ell,n) = ( \ell,m) (m,n)$ then $\Delta_k$ is a category. Set $d ( m , n ) = n - m$, then $d$ is
a functor and $( \Delta_k , d )$ is a $k$-graph. 
The vertices $\Delta_k^0 = \{ (m,m) : m \in \mathbb{Z}^k  \}$
may be identified with $\mathbb{Z}^k$.
\item Resuming the notation of Example~\ref{kgex2} let $\theta : \underline{m} \times \underline{n} \to \underline{m} \times \underline{n}$ be a bijection. Let $\FF^2_\theta$ be the semigroup with generators $\{f_1 , \ldots , f_m , g_1 , \ldots g_n \}$ and relations $f_i g_j = g_{j'} f_{i'}$ where $\theta (i, j) = (i' , j' )$ for $(i, j) \in \underline{m} \times \underline{n}$. Let $d ( f_i ) = e_1$ for $i=1,\ldots,m$ and $d ( g_j ) = e_2$ for $j=1,\ldots ,n$ then $d$ extends to a functor from $\FF^2_\theta$ to $\NN^2$ with the factorisation property, and so $\FF^2_\theta$ is a $2$-graph (see \cite[\S 2]{Y1}).

\item Recall from \cite{FPS2009} that if $\Lambda$ is a $k$-graph and $\alpha$ is an automorphism of $\Lambda$,
then there is a $(k + 1)$-graph $\Lambda \times_\alpha  \ZZ$ with morphisms $\Lambda \times \NN$, range and source maps given
by $r(\lambda, n) = (r(\lambda), 0)$, $s(\lambda, n) = (\alpha^{-n} (s(\lambda)), 0)$, degree map given by $d(\lambda, n) = (d(\lambda), n)$ and
composition given by $(\lambda, m)(\mu, n) := (\lambda \alpha^{m} (\mu), m + n)$. In particular $(\Lambda \times_\alpha \ZZ)^0 = \Lambda^0 \times \{0\}$.
\end{enumerate}
\end{examples}

\noindent
We define the \emph{$1$-skeleton} of a $k$-graph $\Lambda$ to be the $k$-coloured graph $\Sk_\Lambda$ given by $\Sk_\Lambda^0 = \Obj(\Lambda)$, $\Sk_\Lambda^1 = \bigcup_{i\leq k}\Lambda^{e_i}$, with range and source as in $\Lambda$. The colouring map $c : \Sk_\Lambda^1 \to \{c_1,\dots,c_k\}$ is given by $c(f) = c_i$ if and only if $f \in \Lambda^{e_i}$. The $1$-skeleton $\Sk_\Lambda$ comes with a canonical set of squares $\Cc_\Lambda := \{\phi_\lambda : \lambda \in \Lambda^{e_i+e_j}: i \neq j \leq k  \}$, where $\phi_\lambda : E_{k,e_i+e_j} \to \Sk_\Lambda$ is given by $\phi_\lambda(\varepsilon_\ell^{n}) = \lambda(n,n+e_\ell)$ for each $n \leq e_i+e_j$ and $\ell = i,j$. The collection $\Cc_\Lambda$ is complete by \cite[Lemma 4.2]{HRSW}.

Conversely, in \cite[Theorem 4.4,Theorem 4.5]{HRSW} it is shown that for a $k$-coloured graph $E$ with a complete, associative\footnote{The associative condition which only applies if $k \ge 3$ is quite complicated, and we will not deal with it here. For more details see \cite[\S 3]{HRSW}} collection of squares $\Cc_E$ determines a unique $k$-graph $\Lambda_{E,\Cc_E}$.

\begin{examples} \label{ex:morekg}
\begin{enumerate}[(a)]
\item The $2$-graph $\Lambda_{E_\theta , \Cc_{E_\theta}}$, determined by the 2-coloured graph $E_\theta$ with  squares  $\Cc_{E_\theta}$ described in Example~\ref{kgex2} is isomorphic to $\FF^2_\theta$ defined in Examples~\ref{ex:kg} (c) .

\item Recall the $k$-graph $\Delta_k$ described in Examples~\ref{ex:kg}. Part of the $1$-skeleton of $\Delta_3$, as seen from the first octant is shown below:
\[
\begin{tikzpicture}[->=stealth,scale=1.3]
        \node[inner sep=0.25pt, circle, opacity=0] (vert000) at (0,0,0) [draw] {.};
        \node[inner sep=0.25pt, circle, opacity=0] (vert100) at (1,0,0) [draw] {.};
        \node[inner sep=0.25pt, circle, opacity=0] (vert200) at (2,0,0) [draw] {.};
        \node[inner sep=0.25pt, circle, opacity=0] (vert300) at (3,0,0) [draw] {.};
        \node[inner sep=0.25pt, circle, opacity=0] (vert010) at (0,1,0) [draw] {.};
        \node[inner sep=0.25pt, circle, opacity=0] (vert110) at (1,1,0) [draw] {.};
        \node[inner sep=0.25pt, circle, opacity=0] (vert210) at (2,1,0) [draw] {.};
        \node[inner sep=0.25pt, circle, opacity=0] (vert310) at (3,1,0) [draw] {.};
        \node[inner sep=0.25pt, circle, opacity=0] (vert020) at (0,2,0) [draw] {.};
        \node[inner sep=0.25pt, circle, opacity=0] (vert120) at (1,2,0) [draw] {.};
        \node[inner sep=0.25pt, circle, opacity=0] (vert220) at (2,2,0) [draw] {.};
        \node[inner sep=0.25pt, circle, opacity=0] (vert320) at (3,2,0) [draw] {.};
        \node[inner sep=0.25pt, circle, opacity=0] (vert001) at (0,0,1) [draw] {.};
        \node[inner sep=0.25pt, circle, opacity=0] (vert101) at (1,0,1) [draw] {.};
        \node[inner sep=0.25pt, circle, opacity=0] (vert201) at (2,0,1) [draw] {.};
        \node[inner sep=0.25pt, circle, opacity=0] (vert301) at (3,0,1) [draw] {.};
        \node[inner sep=0.25pt, circle, opacity=0] (vert011) at (0,1,1) [draw] {.};
        \node[inner sep=0.25pt, circle, opacity=0] (vert111) at (1,1,1) [draw] {.};
        \node[inner sep=0.25pt, circle, opacity=0] (vert211) at (2,1,1) [draw] {.};
        \node[inner sep=0.25pt, circle, opacity=0] (vert311) at (3,1,1) [draw] {.};
        \node[inner sep=0.25pt, circle, opacity=0] (vert021) at (0,2,1) [draw] {.};
        \node[inner sep=0.25pt, circle, opacity=0] (vert121) at (1,2,1) [draw] {.};
        \node[inner sep=0.25pt, circle, opacity=0] (vert221) at (2,2,1) [draw] {.};
        \node[inner sep=0.25pt, circle, opacity=0] (vert321) at (3,2,1) [draw] {.};
      
            \node[inner sep=0pt, circle, fill=black] at (vert000) [draw] {.};
            \node[inner sep=0.25pt, anchor = north west] at (vert000.south east) {\tiny $\scriptscriptstyle (0,0,0)$};

            \node[inner sep=0pt, circle, fill=black] at (vert001) [draw] {.};
            \node[inner sep=0.25pt, anchor = north west] at (vert001.south east) {\tiny $\scriptscriptstyle (1,0,0)$};

            \node[inner sep=0pt, circle, fill=black] at (vert010) [draw] {.};
            \node[inner sep=0.25pt, anchor = north west] at (vert010.south east) {\tiny $\scriptscriptstyle (0,0,1)$};

            \node[inner sep=0pt, circle, fill=black] at (vert011)  [draw] {.};
            \node[inner sep=0.25pt, anchor = north west] at (vert011.south east) {\tiny $\scriptscriptstyle (1,0,1)$};

            \node[inner sep=0pt, circle, fill=black] at (vert020) [draw] {.};
            \node[inner sep=0.25pt, anchor = north west] at (vert020.south east) {\tiny $\scriptscriptstyle (0,0,2)$};

            \node[inner sep=0pt, circle, fill=black] at (vert021) [draw] {.};
            \node[inner sep=0.25pt, anchor = north west] at (vert021.south east) {\tiny $\scriptscriptstyle (1,0,2)$};

            \node[inner sep=0pt, circle, fill=black] at (vert100) [draw] {.};
            \node[inner sep=0.25pt, anchor = north west] at (vert100.south east) {\tiny $\scriptscriptstyle (0,0,1)$};

            \node[inner sep=0pt, circle, fill=black] at (vert101) [draw] {.};
            \node[inner sep=0.25pt, anchor = north west] at (vert101.south east) {\tiny $\scriptscriptstyle (1,1,0)$};

            \node[inner sep=0pt, circle, fill=black] at (vert110) [draw] {.};
            \node[inner sep=0.25pt, anchor = north west] at (vert110.south east) {\tiny $\scriptscriptstyle (0,1,1)$};

            \node[inner sep=0pt, circle, fill=black] at (vert111) [draw] {.};
            \node[inner sep=0.25pt, anchor = north west] at (vert111.south east) {\tiny $\scriptscriptstyle (1,1,1)$};

            \node[inner sep=0pt, circle, fill=black] at (vert120) [draw] {.};
            \node[inner sep=0.25pt, anchor = north west] at (vert120.south east) {\tiny $\scriptscriptstyle (0,1,2)$};

            \node[inner sep=0pt, circle, fill=black] at (vert121) [draw] {.};
            \node[inner sep=0.25pt, anchor = north west] at (vert121.south east) {\tiny $\scriptscriptstyle (1,1,2)$};

            \node[inner sep=0pt, circle, fill=black] at (vert200) [draw] {.};
            \node[inner sep=0.25pt, anchor = north west] at (vert200.south east) {\tiny $\scriptscriptstyle (0,0,2)$};

            \node[inner sep=0pt, circle, fill=black] at (vert201) [draw] {.};
            \node[inner sep=0.25pt, anchor = north west] at (vert201.south east) {\tiny $\scriptscriptstyle (1,2,0)$};

            \node[inner sep=0pt, circle, fill=black] at (vert210) [draw] {.};
            \node[inner sep=0.25pt, anchor = north west] at (vert210.south east) {\tiny $\scriptscriptstyle (0,2,1)$};

            \node[inner sep=0pt, circle, fill=black] at (vert211) [draw] {.};
            \node[inner sep=0.25pt, anchor = north west] at (vert211.south east) {\tiny $\scriptscriptstyle (1,2,1)$};

            \node[inner sep=0pt, circle, fill=black] at (vert220) [draw] {.};
            \node[inner sep=0.25pt, anchor = north west] at (vert220.south east) {\tiny $\scriptscriptstyle (0,2,2)$};

            \node[inner sep=0pt, circle, fill=black] at (vert221) [draw] {.};
            \node[inner sep=0.25pt, anchor = north west] at (vert221.south east) {\tiny $\scriptscriptstyle (1,2,2)$};

            \node[inner sep=0pt, circle, fill=black] at (vert300) [draw] {.};
            \node[inner sep=0.25pt, anchor = north west] at (vert300.south east) {\tiny $\scriptscriptstyle (0,3,0)$};

            \node[inner sep=0pt, circle, fill=black] at (vert301) [draw] {.};
            \node[inner sep=0.25pt, anchor = north west] at (vert301.south east) {\tiny $\scriptscriptstyle (1,3,0)$};

            \node[inner sep=0pt, circle, fill=black] at (vert310) [draw] {.};
            \node[inner sep=0.25pt, anchor = north west] at (vert310.south east) {\tiny $\scriptscriptstyle (0,3,1)$};

            \node[inner sep=0pt, circle, fill=black] at (vert311) [draw] {.};
            \node[inner sep=0.25pt, anchor = north west] at (vert311.south east) {\tiny $\scriptscriptstyle (1,3,1)$};

            \node[inner sep=0pt, circle, fill=black] at (vert320) [draw] {.};
            \node[inner sep=0.25pt, anchor = north west] at (vert320.south east) {\tiny $\scriptscriptstyle (0,3,2)$};

            \node[inner sep=0pt, circle, fill=black] at (vert321) [draw] {.};
            \node[inner sep=0.25pt, anchor = north west] at (vert321.south east) {\tiny $\scriptscriptstyle (1,3,2)$};
        
        
            \IoIIIdimdots(3.3/3.45/3.6,0,0)
            \IoIIIdimdots(3.3/3.45/3.6,1,0)
            \IoIIIdimdots(3.3/3.45/3.6,2,0)
            \IoIIIdimdots(3.3/3.45/3.6,0,1)
            \IoIIIdimdots(3.3/3.45/3.6,1,1)
            \IoIIIdimdots(3.3/3.45/3.6,2,1)
            \IIoIIIdimdots(0,2.3/2.45/2.6,0)
            \IIoIIIdimdots(1,2.3/2.45/2.6,0)
            \IIoIIIdimdots(2,2.3/2.45/2.6,0)
            \IIoIIIdimdots(3,2.3/2.45/2.6,0)
            \IIoIIIdimdots(0,2.3/2.45/2.6,1)
            \IIoIIIdimdots(1,2.3/2.45/2.6,1)
            \IIoIIIdimdots(2,2.3/2.45/2.6,1)
            \IIoIIIdimdots(3,2.3/2.45/2.6,1)
            \IIIoIIIdimdots(0,0,-0.5/-0.8/-1.1)
            \IIIoIIIdimdots(1,0,-0.5/-0.8/-1.1)
            \IIIoIIIdimdots(2,0,-0.5/-0.8/-1.1)
            \IIIoIIIdimdots(3,0,-0.5/-0.8/-1.1)
            \IIIoIIIdimdots(0,1,-0.5/-0.8/-1.1)
            \IIIoIIIdimdots(1,1,-0.5/-0.8/-1.1)
            \IIIoIIIdimdots(2,1,-0.5/-0.8/-1.1)
            \IIIoIIIdimdots(3,1,-0.5/-0.8/-1.1)
            \IIIoIIIdimdots(0,2,-0.5/-0.8/-1.1)
            \IIIoIIIdimdots(1,2,-0.5/-0.8/-1.1)
            \IIIoIIIdimdots(2,2,-0.5/-0.8/-1.1)
            \IIIoIIIdimdots(3,2,-0.5/-0.8/-1.1)
        
        \draw[style=semithick, red] (vert100.west)--(vert000.east);
        \draw[style=semithick, red] (vert110.west)--(vert010.east);
        \draw[style=semithick, red] (vert120.west)--(vert020.east);
        \draw[style=semithick, red] (vert101.west)--(vert001.east);
        \draw[style=semithick, red] (vert200.west)--(vert100.east);
        \draw[style=semithick, red] (vert210.west)--(vert110.east);
        \draw[style=semithick, red] (vert220.west)--(vert120.east);
        \draw[style=semithick, red] (vert201.west)--(vert101.east);
        \draw[style=semithick, red] (vert300.west)--(vert200.east);
        \draw[style=semithick, red] (vert310.west)--(vert210.east);
        \draw[style=semithick, red] (vert320.west)--(vert220.east);
        \draw[style=semithick, red] (vert301.west)--(vert201.east);
%
        \draw[style=semithick, green!50!black] (vert010.south)--(vert000.north);
      \draw[style=semithick, green!50!black] (vert011.south)--(vert001.north);
        \draw[style=semithick, green!50!black] (vert110.south)--(vert100.north);
        \draw[style=semithick, green!50!black] (vert111.south)--(vert101.north);
       \draw[style=semithick, green!50!black] (vert210.south)--(vert200.north);
        \draw[style=semithick, green!50!black] (vert211.south)--(vert201.north);
        \draw[style=semithick, green!50!black] (vert310.south)--(vert300.north);
        \draw[style=semithick, green!50!black] (vert311.south)--(vert301.north);
       \draw[style=semithick, green!50!black] (vert020.south)--(vert010.north);
        \draw[style=semithick, green!50!black] (vert021.south)--(vert011.north);
        \draw[style=semithick, green!50!black] (vert120.south)--(vert110.north);
        \draw[style=semithick, green!50!black] (vert121.south)--(vert111.north);
        \draw[style=semithick, green!50!black] (vert220.south)--(vert210.north);
        \draw[style=semithick, green!50!black] (vert221.south)--(vert211.north);
        \draw[style=semithick, green!50!black] (vert320.south)--(vert310.north);
        \draw[style=semithick, green!50!black] (vert321.south)--(vert311.north);
        \draw[style=semithick, red] (vert111.west)--(vert011.east);
        \draw[style=semithick, red] (vert121.west)--(vert021.east);
        \draw[style=semithick, red] (vert211.west)--(vert111.east);
        \draw[style=semithick, red] (vert221.west)--(vert121.east);
        \draw[style=semithick, red] (vert311.west)--(vert211.east);
        \draw[style=semithick, red] (vert321.west)--(vert221.east);
%
        \draw[style=semithick, blue] (vert001.north east) -- (vert000.south west);
        \draw[style=semithick, blue] (vert011.north east) -- (vert010.south west);
        \draw[style=semithick, blue] (vert021.north east) -- (vert020.south west);
        \draw[style=semithick, blue] (vert101.north east) -- (vert100.south west);
        \draw[style=semithick, blue] (vert111.north east) -- (vert110.south west);
        \draw[style=semithick, blue] (vert121.north east) -- (vert120.south west);
        \draw[style=semithick, blue] (vert201.north east) -- (vert200.south west);
        \draw[style=semithick, blue] (vert211.north east) -- (vert210.south west);
        \draw[style=semithick, blue] (vert221.north east) -- (vert220.south west);
        \draw[style=semithick, blue] (vert301.north east) -- (vert300.south west);
        \draw[style=semithick, blue] (vert311.north east) -- (vert310.south west);
        \draw[style=semithick, blue] (vert321.north east) -- (vert320.south west);
    \end{tikzpicture}
\]

\noindent
It is straightforward to see that $\Sk_{\Delta_3}^0 = \mathbb{Z}^3$, $\Sk_{\Delta_3}^1 = \{ ( m , m+e_j ) : m \in \mathbb{Z}^3 , 1 \le j \le 3 \}$, $r(m,m+e_j)=m$ and $s (m,m+e_j)=m+e_j$. The commuting squares are 
\[
\Cc = \{ (m,m+e_i) (m+e_i,m+e_i+e_j) = (m,m+e_j)(m+e_i,m+e_i+e_j) : m \in \mathbb{Z}^3 , 1 \le i \neq j \le 3 \} .
\]

\noindent
One checks that this collection of squares is complete and associative.
\end{enumerate}
\end{examples}

\section{Computing the fundamental group of a \texorpdfstring{$k$}{k}-graph} \label{sec:pi1}

\noindent In this section we define and provide a  presentation of the fundamental group of a $k$-graph. Kaliszewski, Kumjian, Quigg and Sims show in \cite[Corollary~4.2]{KKQS} that the fundamental group of a $k$-graph may be realised as a quotient of the fundamental group of its skeleton. We provide an alternative proof of this in Theorem~\ref{thm:fgc} which yields a natural presentation of the group. We demonstrate the practical use of our result in Examples~\ref{ex:kps_ex}.

\begin{dfn}[{\cite[Definition~2.8]{KPS3}}]\label{connected}
We say that the $k$-graph $\Lambda$ is \emph{connected} if the equivalence relation on $\Lambda^0$ generated by the relation
$u \sim v$ iff $u \Lambda v \neq \emptyset$ is $\Lambda^0 \times \Lambda^0$.
\end{dfn}

\noindent We review the construction of the fundamental groupoid of a  connected $k$-graph from \cite{PQR1}.
First we describe the fundamental groupoid $\Gg (E)$ of a directed graph $E$.

Following \cite[p. 197]{PQR1}, let $E$ be a directed graph, then a \emph{relation} for $E$ is a pair $( \alpha , \beta )$ of paths in $\mathcal{P} (E)$ such that $s ( \alpha ) = s ( \beta )$ and $r ( \alpha ) = r ( \beta )$. If $K$ is a set of relations for $E$, then $\mathcal{P} (E) / K$ is the quotient of $\mathcal{P} (E)$ by the equivalence relation generated by $K$, for more details see \cite[\S 2]{PQR1}.

As in \cite[Definition 5.2]{PQR1} let $C = \{ ( e^{-1}e,s(e)) : e \in E^u \}$ and call $C$ the set of \textit{cancellation relations} for $E^+$. The quotient $\mathcal{P} (E^+) / C$ is then the fundamental groupoid, $\mathcal{G} (E)$ of $E$. We denote the quotient functor $\mathcal{P} (E^+) \to \mathcal{G} (E)$ by $q_C$. Elements of $\mathcal{G} (E)$ are reduced undirected paths in $E$ with composition given by concatenation followed by cancellation.

Now we turn to defining the fundamental groupoid of a $k$-graph $\Lambda$. First apply the above construction to form $\Gg (E)$ where $E=\Sk_\Lambda$. As in~\cite{HRSW,PQR1}, let $S$ be the equivalence relation on $\Pp(E)$ generated by $\Cc_\Lambda$, the commuting squares in $E$ determined by $\Lambda$. That is, the transitive closure in $\mathcal{P} (E) \times \mathcal{P} (E)$ of
\begin{equation}\label{eq:generating equivalences}
    \begin{split}
    \textstyle \bigcup_{n \ge 2} \{(\mu, \nu) \in E^n \times E^n: {}& \text{there exists } i < n \text{ such that } \\
        &\ \mu_j = \nu_j\text{ whenever } j \not\in \{i, i+1\} \text{ and } \mu_i \mu_{i+1} \sim_{\Cc_{\Lambda}} \nu_i \nu_{i+1}\}.
    \end{split}
\end{equation}

\noindent As in \cite[Observation 5.3]{PQR1} this relation may be extended uniquely to a relation $S^+$ on $\mathcal{P} ( E^+ )$ by adding the relation $( f^{-1} e^{-1} , h^{-1} g^{-1} )$ whenever $(ef,gh) \in S$. This induces a relation, also called $S^+$, on $\mathcal{G} (E)$.

\begin{dfns}
Let $\Lambda$ be a connected $k$-graph. Then the \emph{fundamental groupoid}, $\mathcal{G} ( \Lambda )$ is 
\[
\mathcal{G} ( \Lambda ) := \Gg(\Sk_\Lambda) / S^+ = ( \mathcal{P} ( \Sk_\Lambda^+ ) / C ) / S^+ = \Pp(\Sk_\Lambda^+) / (C \cup S^+).
\]

\noindent For $v \in \Lambda^0$ the \emph{fundamental group based at $v \in \Lambda^0$} is the isotropy group $\pi_1 ( \Lambda , v ) := v \mathcal{G} ( \Lambda ) v$.
\end{dfns}

\noindent
The above definition of the fundamental groupoid of a $k$-graph is consistent  with the one given in \cite[Definition~5.6]{PQR1} (see also the accompanying discussion).

Our goal is to obtain a practical way of giving a presentation of $\pi_1(\Lambda,v)$. First recall that for a connected directed graph $E$, the quotient functor $q_C : \mathcal{P}(E^+ ) \to \Pp(E^+)/ C = \mathcal{G} ( E )$ restricts to $v \mathcal{P} ( E^+ ) v$ and the image is the isotropy group $v\Gg(E)v$, which is by definition the fundamental group of $E$ at $v$, denoted $\pi_1 ( E , v )$.

The following result is well-known (see\cite{St} for instance).

\begin{lem}\label{lem:directed graph fundamental group}
Let $E$ be a connected directed graph, $v \in E^0$, and $T$ be a maximal spanning tree of $E$. 
Then $\pi_1 ( E ,v ) \cong \langle E^1 \mid T^1 \rangle := \langle e \in E^1 \mid e = 1 \text{ if } e \in T^1 \rangle$.
\end{lem}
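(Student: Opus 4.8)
The plan is to show that $\pi_1(E,v) := v\Gg(E)v$ has the claimed presentation by exhibiting a concrete isomorphism built from the spanning tree $T$ and the loops $\zeta_e$ of Remark~\ref{rmk: fundamental group ntn}. This is the standard ``fundamental group of a graph is free on the edges outside a spanning tree'' result, adapted to the groupoid language of \cite{PQR1}; the work is in translating between the quotient-category description of $\Gg(E)$ and a group presentation.

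\textbf{Setup and the candidate map.} First I would recall that $\mathcal{G}(E) = \mathcal{P}(E^+)/C$ consists of reduced undirected paths, and that $\pi_1(E,v)$ is the isotropy group $v\mathcal{G}(E)v$, which by the remark preceding Remark~\ref{rmk: fundamental group ntn} is the image of $v\mathcal{P}(E^+)v$ under $q_C$. For each $e \in E^1$ let $x_e$ denote the generator corresponding to $e$ in the free group $F$ on the set $E^1$, and let $N$ be the normal subgroup generated by $\{x_e : e \in T^1\}$, so that $\langle E^1 \mid T^1\rangle = F/N$. I would define a homomorphism $\Phi: F \to \pi_1(E,v)$ on generators by $\Phi(x_e) = q_C(\zeta_e) = q_C(\eta_{r(e)}^{-1} e \, \eta_{s(e)})$; since $\eta_w$ lies in the tree, $\zeta_e$ is indeed an element of $v\mathcal{P}(E^+)v$, so this is well-defined. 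Because every generator $x_e$ with $e \in T^1$ is killed — here $\zeta_e = \eta_{r(e)}^{-1} e\, \eta_{s(e)}$ is a reduced path in $T$ from $v$ to $v$, hence trivial in $\mathcal{G}(E)$ — the map $\Phi$ descends to $\overline{\Phi}: F/N \to \pi_1(E,v)$.

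\textbf{Surjectivity and the inverse map.} For surjectivity, note that any element of $\pi_1(E,v)$ is represented by a reduced undirected loop $\lambda = \lambda_1 \cdots \lambda_n$ at $v$ with each $\lambda_i \in E^u$. Using the tree paths to insert $\eta_{w}\eta_{w}^{-1} = 1$ between consecutive edges, one rewrites $q_C(\lambda)$ as a product $\prod_i q_C(\zeta_{\lambda_i})$ where $\zeta_{e^{-1}} = \zeta_e^{-1}$; this shows $\overline{\Phi}$ is onto. For injectivity, I would construct the inverse explicitly: define $\Psi: \mathcal{P}(E^+) \to F/N$ on edges by $\Psi(e) = \overline{x_e}$ and $\Psi(e^{-1}) = \overline{x_e}^{-1}$ for $e \in E^1$, extended multiplicatively. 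One checks $\Psi$ respects the cancellation relations $C$ (immediate, since $\Psi(e^{-1}e)$ and $\Psi(ee^{-1})$ are trivial), so $\Psi$ factors through a functor $\mathcal{G}(E) \to F/N$ — interpreting the target group as a one-object groupoid is fine on the isotropy piece, but to be careful one restricts attention to $v\mathcal{P}(E^+)v$ and checks $\Psi$ is constant on $S$-independent... rather, one verifies directly that $\Psi \circ \Phi = \mathrm{id}$ on generators and $\overline{\Phi}\circ(\Psi|_{v\mathcal{G}(E)v}) = \mathrm{id}$, the latter using that for a loop $\lambda$ at $v$ the element $\zeta_\lambda$ differs from $\lambda$ only by tree edges, which $\Psi$ sends into $N$.

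\textbf{Main obstacle.} The genuinely delicate point is verifying that $\Psi$ is well-defined on $\pi_1(E,v)$, i.e.\ that it does not depend on the choice of undirected-path representative — two reduced loops at $v$ are equal in $\mathcal{G}(E)$ only if they are literally equal after cancellation, so one must argue that $\Psi$ is invariant under inserting/deleting $ee^{-1}$, which is where the relation $\Psi(e)\Psi(e^{-1}) = 1$ in $F/N$ does the job. The other bookkeeping subtlety is making the passage between ``loops at $v$'' and ``products of $\zeta_e$'s'' rigorous: one shows by induction on path length that for any $\mu \in w_0\mathcal{P}(E^+)w_n$ with $\mu = \mu_1\cdots\mu_n$, $q_C(\zeta_\mu) = \prod_{i=1}^n q_C(\zeta_{\mu_i})$, using $\zeta_{\mu} = \eta_{w_0}^{-1}\mu\eta_{w_n}$ and inserting $\eta_{w_i}\eta_{w_i}^{-1}$ at each junction. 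Once these two points are nailed down, $\overline{\Phi}$ and $\Psi$ are mutually inverse and the lemma follows.
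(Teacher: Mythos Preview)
Your proposal is correct and follows essentially the same approach as the paper: both arguments work with the loops $\zeta_e$ and show that those coming from tree edges are trivial while the remainder freely generate $\pi_1(E,v)$. The only difference is one of detail --- the paper dispatches the ``freely generate'' assertion by citing it as a standard result (referencing \cite{St}), whereas you unpack that black box by explicitly constructing the inverse homomorphism $\Psi$ and verifying the two compositions are identities.
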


\begin{proof}
Suppose $e \not\in T^u = T^1 \sqcup T^{-1}$. With notation as in Remark~\ref{rmk: fundamental group ntn}, all the edges of $\eta_{r(e)} , \eta_{s(e)}$ are in $T^u$, so $\zeta_e$ is reduced undirected path in $E$ and hence $q_C ( \zeta_e ) = \zeta_e$. Suppose that $e \in T^u$ then $\zeta_e$ is an undirected path in $T$ from $v$ to $v$ and so its reduced form must be $v$, hence $q_C ( \zeta_e ) = v$.

To complete the proof it suffices to show that $\{ \zeta_e : e \in E^1 \backslash T^1 \}$ freely generate $\pi_1 ( E,v)$. This is a standard result, see \cite[\S2.1.7, \S2.1.8]{St} for example.
\end{proof}

\noindent
Since Lemma~\ref{lem:directed graph fundamental group} holds for any choice of maximal spanning tree, it follows that $\pi_1(E,v)$ does not depend on the choice of basepoint $v$. We denote the fundamental group of a graph $E$ by $\pi_1(E)$. Now we turn our attention to computing the fundamental group of a connected $k$-graph $\Lambda$. Since $\Lambda \cong \Pp ( \Sk_\Lambda ) / S$  we expect the relation $S$ to appear in the description of $\pi_1(\Lambda)$.

\begin{thm} \label{thm:fgc}
Let $\Lambda$ be a connected $k$-graph, $v \in \Lambda^0$ and let $T$ be a maximal spanning tree for $\Sk_\Lambda$. Then $\pi_1 ( \Lambda , v ) \cong \langle \Sk_\Lambda^1 \mid t=1 \text{ if } t \in T^1, ef=gh \text{ if } (ef,gh) \in S^+  \rangle$.
\end{thm}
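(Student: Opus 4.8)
The plan is to realise $\pi_1(\Lambda,v)$ as a double quotient and then apply Lemma~\ref{lem:directed graph fundamental group} together with a presentation-of-quotient argument. By definition $\mathcal{G}(\Lambda) = \Pp(\Sk_\Lambda^+)/(C\cup S^+)$, and $\pi_1(\Lambda,v) = v\mathcal{G}(\Lambda)v$ is the isotropy group at $v$. Write $E = \Sk_\Lambda$ for brevity. The quotient functor $\Pp(E^+)\to\Pp(E^+)/(C\cup S^+)$ factors as $\Pp(E^+)\xrightarrow{q_C}\mathcal{G}(E)\xrightarrow{q_{S^+}}\mathcal{G}(\Lambda)$, so $\pi_1(\Lambda,v)$ is the image of $v\mathcal{G}(E)v=\pi_1(E,v)$ under $q_{S^+}$, i.e.\ $\pi_1(\Lambda,v)$ is $\pi_1(E,v)$ modulo the normal subgroup generated by the relators coming from $S^+$. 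Since $S^+$ is generated on $\Pp(E^+)$ by the pairs $(ef,gh)$ with $(ef,gh)\in S$ together with their formal inverses $(f^{-1}e^{-1},h^{-1}g^{-1})$, and since $S$ is itself the transitive closure of the elementary commuting-square moves in~\eqref{eq:generating equivalences}, I first reduce to showing that $\pi_1(\Lambda,v)$ is $\pi_1(E,v)$ modulo the relations $ef=gh$ for commuting squares $(ef,gh)$ (the length-$2$ generators of $S$), because any pair in $S$ is a concatenation of such elementary moves sandwiched between common prefixes and suffixes, and conjugating/translating by $\zeta$ as in Remark~\ref{rmk: fundamental group ntn} turns the ``sandwiched'' moves into the same group relation.

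Next I would make the bookkeeping precise using the spanning tree $T$. By Lemma~\ref{lem:directed graph fundamental group}, $\pi_1(E,v)\cong\langle E^1\mid t=1,\ t\in T^1\rangle$, with the isomorphism sending a reduced loop to the word obtained by deleting the $T^1$-letters; concretely each edge $e\in E^1$ corresponds to the loop $\zeta_e=\eta_{r(e)}^{-1}e\,\eta_{s(e)}$, and the isomorphism is induced by $\lambda\mapsto\zeta_\lambda$ on $v\Pp(E^+)v$. Under this identification, the relation in $\mathcal{G}(\Lambda)$ coming from a commuting square $ef\sim_{\Cc}gh$ (so $r(e)=r(g)$, $s(e)=r(f)$, $s(g)=r(h)$, $s(f)=s(h)$) becomes, after translating the equality $ef=gh$ of paths in $E^+$ to a loop at $v$, the identity $\zeta_{ef}=\zeta_{gh}$, which in the free group on $E^1$ modulo $T^1$ is exactly $\zeta_e\zeta_f=\zeta_g\zeta_h$, i.e.\ the image of ``$ef=gh$'' under the presentation map $E^1\to\langle E^1\mid T^1\rangle$. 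So the normal closure of the $S^+$-relators inside $\pi_1(E,v)$ is precisely the normal closure of $\{\zeta_e\zeta_f\zeta_h^{-1}\zeta_g^{-1} : (ef,gh)\in S^+\}$, and a standard presentation-of-a-quotient lemma (if $G=\langle X\mid R\rangle$ and $N$ is the normal closure of a set $W$ of words, then $G/N=\langle X\mid R\cup W\rangle$) yields
\[
\pi_1(\Lambda,v)\cong\big\langle\,\Sk_\Lambda^1 \;\big|\; t=1\ \text{if}\ t\in T^1,\quad ef=gh\ \text{if}\ (ef,gh)\in S^+\,\big\rangle.
\]
I would also note that since $S^+$ is generated by $S$ and its inverses, and inverse relators give no new normal-closure elements, one may equally write ``$(ef,gh)\in S$'' in the presentation; and since $S$ is the transitive closure of the elementary moves, one may even restrict to those elementary commuting squares — but I would keep the statement as given.

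The main obstacle is the first reduction: carefully checking that passing from the \emph{transitive closure} relation $S$ (pairs of arbitrary-length paths differing by one commuting-square swap in one coordinate) to the finite set of \emph{length-two} commuting-square relations does not change the normal closure once everything is transported to loops at $v$. This requires showing that if $\mu,\nu\in E^n$ agree off $\{i,i+1\}$ with $\mu_i\mu_{i+1}\sim_\Cc\nu_i\nu_{i+1}$, then in $\pi_1(E,v)$ the element $\zeta_\mu\zeta_\nu^{-1}$ lies in the normal closure of the length-two relators — this is a telescoping computation: write $\zeta_\mu=\zeta_{\mu_1}\cdots\zeta_{\mu_n}$, $\zeta_\nu=\zeta_{\nu_1}\cdots\zeta_{\nu_n}$ in the group (using that $\zeta$ is multiplicative on composable paths up to the tree relations), and observe the two words differ only in the block $\zeta_{\mu_i}\zeta_{\mu_{i+1}}$ versus $\zeta_{\nu_i}\zeta_{\nu_{i+1}}$, so their ratio is a conjugate of $\zeta_{\mu_i}\zeta_{\mu_{i+1}}\zeta_{\nu_{i+1}}^{-1}\zeta_{\nu_i}^{-1}$, which is the relator for that commuting square. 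Transitivity then follows since normal closures are closed under products. Everything else — the functoriality of the double quotient, the identification of images of isotropy groups, and the presentation-of-quotient step — is routine categorical and combinatorial-group-theory bookkeeping.
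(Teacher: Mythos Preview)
Your approach is essentially the same as the paper's: factor the quotient as $\Pp(\Sk_\Lambda^+)\xrightarrow{q_C}\mathcal{G}(\Sk_\Lambda)\xrightarrow{q_{S^+}}\mathcal{G}(\Lambda)$, identify $\pi_1(\Lambda,v)$ with $\pi_1(\Sk_\Lambda,v)$ modulo the $S^+$-induced relations, transport those relations to the free group via the $\zeta$-map of Remark~\ref{rmk: fundamental group ntn} (using $\zeta_{ef}=\zeta_e\zeta_f$), and invoke Lemma~\ref{lem:directed graph fundamental group} together with the presentation-of-a-quotient principle. The paper's proof does exactly this in three lines; your version is the same argument written out with more care, including the telescoping reduction from arbitrary $S$-pairs to the elementary length-two commuting-square relators, which the paper does not spell out (and which, as you note, is not strictly required since the statement already quantifies over all of $S^+$).
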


\begin{proof}
Denote by $q_S :\Gg(\Sk_\Lambda) \to \Gg(\Sk_\Lambda) / S^+ = \Gg(\Lambda)$ the quotient map. With notation as in Remark~\ref{rmk: fundamental group ntn}, observe that $\zeta_e \zeta_f = \zeta_{ef}$ and $\zeta_g \zeta_h = \zeta_{gh}$ in $\mathcal{G} (\Sk_\Lambda)$. Hence $q_S ( \zeta_e \zeta_f ) = q_S ( \zeta_g \zeta_h )$ if and only if $(ef,gh) \in S^+$. Since taking quotients preserves objects, $\pi_1(\Lambda,v) = \pi_1 ( \Sk_\Lambda ) /S^+$. Then Lemma~\ref{lem:directed graph fundamental group} implies that $\pi_1(\Lambda,v) \cong \langle \Sk_\Lambda^1 \mid t=1 \text{ if } t \in T^1, ef=gh \text{ if } (ef,gh) \in S^+  \rangle$.
\end{proof}

\noindent
Since Theorem~\ref{thm:fgc} holds for every choice of $T$, the group $\pi_1(\Lambda,v)$ does not depend on $T$. We henceforth denote by $\pi_1(\Lambda)$ the fundamental group of $\Lambda$. Theorem~\ref{thm:fgc} gives us an explicit presentation of $\pi_1(\Lambda)$, as seen in the following examples.

\vspace{5mm}
\goodbreak

\begin{examples}\label{ex:kps_ex} \hfill

\begin{enumerate}[(i)]

\item 
\begin{minipage}[t]{0.75 \linewidth}
Let $\Sigma$ be the 2-graph, which is completely determined by its $1$-skeleton, shown to the right. In \cite[Example 3.10]{KKQS} it was shown that the topological realisation of $\Sigma$ is the $2$-sphere $S^2$. 
Let $T$ be the maximal spanning tree for $\Sk_\Sigma$ consisting of edges $T^1=\{a,b,c,d,e\}$. 
The commuting squares in $\Sk_\Sigma$ are $(ga,ce)$, $(gb,cf)$, $(de,ha)$ and $(df,hb)$, thus Theorem~\ref{thm:fgc} gives
\[
\pi_1(\Sigma) \cong \langle \Sk_\Sigma^1  \mid  t = 1 \text{ if } t \in T^1, ga=ce,gb=cf,de=ha,df=hb \rangle .
\]

\noindent
The first relation forces $g=1$, the second $g=f$, the third $h=1$ and the fourth $f=h$. Hence all the generators of $\pi_1(\Sigma)$ are equal to $1$. Therefore $\pi_1(\Sigma)$ is trivial.
\end{minipage}%
\begin{minipage}[t]{0.25 \linewidth} 
\[
\begin{tikzpicture}[scale=1.5, >=stealth]
    \node[inner sep=0.8pt] (100) at (1,0,0) {$u$};
    \node[inner sep=0.8pt] (-100) at (-1,0,0) {$v$};
    \node[inner sep=0.8pt] (010) at (0,1,0) {$w$};
    \node[inner sep=0.8pt] (0-10) at (0,-1,0) {$x$};
    \node[inner sep=0.8pt] (001) at (0,0,1) {$y$};
    \node[inner sep=0.8pt] (00-1) at (0,0,-1) {$z$};
    \draw[->, blue] (100) .. controls +(0,0.6,0) and +(0.6,0,0) .. (010) node[pos=0.5, anchor=south west] {\color{black}$a$};
    \draw[->, red, dashed] (100) .. controls +(0,-0.6,0) and +(0.6,0,0) .. (0-10) node[pos=0.5, anchor=north west] {\color{black}$e$};
    \draw[->, blue] (-100) .. controls +(0,0.6,0) and +(-0.6,0,0) .. (010) node[pos=0.5, anchor=south east] {\color{black}$b$};
    \draw[->, red, dashed] (-100) .. controls +(0,-0.6,0) and +(-0.6,0,0) .. (0-10) node[pos=0.5, anchor=north east] {\color{black}$f$};
    \draw[->, red, dashed] (010) .. controls +(0,0,0.6) and +(0,0.6,0) .. (001) node[pos=0.5, anchor=east] {\color{black}$g$};
    \draw[->, red, dashed] (010) .. controls +(0,0,-0.6) and +(0,0.6,0) .. (00-1) node[pos=0.8, anchor=east] {\color{black}$h$};
    \draw[->, blue] (0-10) .. controls +(0,0,0.6) and +(0,-0.6,0) .. (001) node[pos=0.85, anchor=west] {\color{black}$c$};
    \draw[->, blue] (0-10) .. controls +(0,0,-0.6) and +(0,-0.6,0) .. (00-1) node[pos=0.5, anchor=south east] {\color{black}$d$};
\end{tikzpicture}
\]
\end{minipage}

\item
\begin{minipage}[t]{0.75 \linewidth}
Consider the 2-graph $\Pi$ with $1$-skeleton $\Sk_\Pi$ shown to the right, with
commuting squares $(ga,ce)$, $(gb,df)$, $(hb,cf)$ and $(ha,de)$. In \cite[Example 3.12]{KKQS} it was shown that the topological realisation of $\Pi$ is the projective plane. Choose spanning tree $T$ of $\Sk_\Pi$ with $T^1=\{a,b,c,f\}$. Then Theorem~\ref{thm:fgc} gives
\[
\pi_1(\Pi)=\langle \Sk_\Pi^1 \mid t=1 \text{ if } t \in T^1, ga=ce, gb=df, hb=cf , ha=de \rangle .
\]

\noindent
The relations become $g=e$, $g=d$, $h=1$ and $de=1$. So the fundamental group of $\pi_1 ( \Pi ) \cong \langle e \mid e^2=1\rangle\cong \mathbb{Z}/2\mathbb{Z}$, the fundamental group of the projective plane.
\end{minipage}%
\begin{minipage}[t]{0.25\linewidth} 
\[
\begin{tikzpicture}[scale=1.5, >=stealth]
    \node[inner sep=0.8pt, circle] (u) at (0,0) {$u$};
    \node[inner sep=0.8pt, circle] (v) at (0,1) {$v$};
    \node[inner sep=0.8pt, circle] (w) at (0,-1) {$w$};
    \node[inner sep=0.8pt, circle] (x) at (1,0) {$x$};
    \node[inner sep=0.8pt, circle] (y) at (-1,0) {$y$};
    \draw[->, blue] (v) .. controls +(-0.2,-0.5) .. (u) node[pos=0.5, left, black] {$c$};
    \draw[->, blue] (v) .. controls +(0.2,-0.5) .. (u) node[pos=0.5, right, black] {$d$};
    \draw[->, red, dashed] (w) .. controls +(-0.2,0.5) .. (u) node[pos=0.5, left, black] {$g$};
    \draw[->, red, dashed] (w) .. controls +(0.2,0.5) .. (u) node[pos=0.5, right, black] {$h$};
    \draw[->, blue] (x)--(w) node[pos=0.5, anchor=north west, black] {$b$};
    \draw[->, red, dashed] (x)--(v) node[pos=0.5, anchor=south west, black] {$f$};
    \draw[->, blue] (y)--(w) node[pos=0.5, anchor=north east, black] {$a$};
    \draw[->, red, dashed] (y)--(v) node[pos=0.5, anchor=south east, black] {$e$};
\end{tikzpicture}
\]
\end{minipage}

\item Recall the $1$-skeleton of the $2$-graph $\mathbb{F}^2_\theta$ described in Example~\ref{kgex2}. Since $\mathbb{F}^2_\theta$ has a single vertex, $v$, the maximal spanning tree for its $1$-skeleton $\Sk_{\mathbb{F}^2_\theta}$ is $v$. The commuting squares of $\Sk_{\mathbb{F}^2_\theta}$ are $(f_i g_j , g_{j'} f_{i'} )$ where $\theta (i,j) = (i',j')$ for $(i,j) \in \underline{m} \times \underline{n}$. Hence by Theorem~\ref{thm:fgc} the fundamental group of $\mathbb{F}^2_\theta$ is 
\[
\langle f_i , g_j \mid f_i g_j = g_{j'} f_{i'}  \text{ where } \theta (i,j) = (i',j') \rangle .
\]

\noindent
For different choices of $\theta$ we get quite different fundamental groups:

\begin{enumerate}[(a)]
\item If $m=n=2$ and $\theta : \underline{2} \times \underline{2} \to \underline{2} \times \underline{2}$ is the identity map then 
\[
	\pi_1 ( \mathbb{F}^2_\theta ) \cong \langle f_1 , f_2  \rangle \times \langle g_1 ,g_2  \rangle \cong (\ZZ * \ZZ) \times (\ZZ * \ZZ) 
	= \pi_1 ( S^1 \vee S^1 ) \times \pi_1 ( S^1 \vee S^1 ) ,
\]
where $*$ is the free product, and $\vee$ is the wedge sum.

\item If $m=n=2$ and $\theta$ is given by $\theta(i,j)=(j,i)$  then by Theorem~\ref{thm:fgc} we have
\[
\pi_1 ( \mathbb{F}^2_\theta ) \cong \langle f_1 , f_2 , g_1, g_2  \mid f_1 g_1 = g_1 f_1 , f_1 g_2 = g_1 f_2 , f_2 g_1 = g_2 f_1 , f_2 g_2 = g_2 f_2 \rangle .
\]

\noindent
The first and fourth relations give $f_i^{-1}g_i = g_if_i^{-1}$ for $i=1,2$, then using the second relation we have
$f_1^{-1}g_1=g_2f_2^{-1}$. Putting these together gives $g_1f_1^{-1} = f_2^{-1}g_2$, and hence $f_2g_1 = g_2f_1$. So the third relation is redundant and
\[
	\pi_1(\mathbb{F}^2_\theta) \cong (\ZZ^2 * \ZZ^2)/\langle g_1f_1^{-1} = g_2f_2^{-1} \rangle = \ZZ^2 *_\ZZ \ZZ^2 ,
\]

\noindent
where $\{f_1,g_1\}$ generate the first copy of $\ZZ^2$ and $\{f_2,g_2\}$ generate the second copy, and the amalgamation over $\ZZ$ is with respect to the identifications of $\ZZ$ in $\ZZ^2$ given by $1 \mapsto g_if_i^{-1}$ for $i=1,2$. So $\mathbb{F}^2_\theta$ has the same fundamental group as the two-holed torus.
\end{enumerate}

\item Recall the $3$-graph $\Delta_3$ and its $1$-skeleton described in Examples~\ref{ex:kg}(b) and Examples~\ref{ex:morekg}(b). We claim that
$\pi_1 ( \Delta_3 , (0,0,0) ) $ is isomorphic to the trivial group $\{ 1 \}$. Whilst this follows from the observation that the geometric realisation of $\Delta_3$ is $\mathbb{R}^3$ (see \cite{KKQS}), it is instructive to do the computation using Theorem~\ref{thm:fgc}, without reference to this observation. We choose spanning tree for $\Sk_\Lambda$ consisting of the lines $x=y=0$ ($z$-axis, green), $x=0,z=k$ for $k \in \mathbb{Z}$ ($y$-axis at each $z$-level, red) and $y=k$ for $k \in \mathbb{Z}$  (constant $y$-lines on each $z$-level, blue).

First consider the row of cubes with bases $S_{0j0}$, $j \in \mathbb{Z}$ along the $y$-axis as shown below 
\[
\begin{tikzpicture}[->=stealth]
\coordinate (O) at (0,0,0);
\coordinate (A) at (0,\Width,0);
\coordinate (B) at (0,\Width,\Height);
\coordinate (C) at (0,0,\Height);
\coordinate (D) at (\Depth,0,0);
\coordinate (E) at (\Depth,\Width,0);
\coordinate (F) at (\Depth,\Width,\Height);
\coordinate (G) at (\Depth,0,\Height);

\draw[blue] (C) -- (O);
\draw[red,dashed] (G) -- (C);
\draw[blue] (G) -- (D);
\draw[red] (D) -- (O); 

\draw[blue] (B) -- (A);
\draw[red,dashed] (F) -- (B);
\draw[blue] (F) -- (E);
\draw[red] (E) -- (A);

\draw[green!50!black,dotted]  (E) -- (D);
\draw[green!50!black,dotted]  (B) -- (C);
\draw[green!50!black,dotted]   (F) -- (G);
\draw[green!50!black] (A) -- (O);

\node at (2.2,1,0) {\Tiny $z_{011}$};
\node at (-2.3,1,0.2) {\Tiny $z_{0-11}$}; %
\node at (0,1.3,2.6) {\Tiny $z_{101}$}; 
\node at (-2.2,1.3,2.5) {\Tiny $z_{1-11}$};
\node at (-1,0,2.5) {\Tiny $y_{100}$}; 
\node at (1.5,0,2.5) {\Tiny $y_{110}$}; 
\node at (0.7,1.4,0) {\Tiny $y_{111}$}; 
\node at (-1.4,1.4,0) {\Tiny $y_{101}$};
\node at (1,1,2.2) {\Tiny $(0,0,0)$};
\node at (1,0.7,0) {\Tiny $z_{111}$};

\node at (2.5,0,0) {\Tiny $(0,1,0)$};
\node at (0,2.2,0) {\Tiny $(0,0,1)$};
\node at (0,0,2.5) {\Tiny $(1,0,0)$};

\node at (1,0,1) {\Tiny $S_{010}$};
\node at (-1,0,1) {\Tiny $S_{000}$};

\node at (3.2,0,0) {\tiny $\ldots$};
\node at (2.4,2,0) {\tiny $\ldots$};
\node at (-2.2,0,0) {\tiny $\ldots$};
\node at (-2.4,2,0) {\tiny $\ldots$};

\node at (2.4,2,2) {\tiny $\ldots$};
\node at (2.4,0,2) {\tiny $\ldots$};
\node at (-2.2,2,2) {\tiny $\ldots$};
\node at (-2.4,0,2) {\tiny $\ldots$};

\draw[red] (A) -- (-2,2,0);
\draw[red] (O) -- (-2,0,0);
\draw[red,dashed] (B) -- (-2,2,2);
\draw[red,dashed] (C) -- (-2,0,2);

\draw[green!50!black,dotted] (-2,2,0) -- (-2,0,0);
\draw[green!50!black,dotted] (-2,2,2) -- (-2,0,2);

\draw[blue] (-2,2,2) -- (-2,2,0);
\draw[blue] (-2,0,2) -- (-2,0,0);

\end{tikzpicture}
\]

\noindent
The solid lines shown above are part of the spanning tree. For the cube with base $S_{010}$ we have $y_{110} = y_{111}= 1$ as three sides are trivial on the top and bottom faces; then $z_{101} = z_{011} = 1$ as three sides are now trivial of the left and back faces, finally $z_{111} =1$ as three sides are now trivial of the right face. Considering the cube with base $S_{000}$ we find $y_{100} = y_{101} =1$ and $z_{1-11} = z_{0-11} =1$ by the same reasoning. Now we may repeat this for the cubes moving in the positive and negative directions on the $y$-axis.

Now, moving in the positive $x$-direction and considering the line of cubes with bases $S_{1j0}$, $j \in \mathbb{Z}$, we get a similar picture except the green lines on the back face are all known to be trivial, and hence all edges on this line of cubes $S_{1j0}$, $j\in \mathbb{Z}$ are trivial. Continuing in this way we find that the edges on the line of cubes $S_{ij0}$, $i \in \mathbb{N}, j \in \mathbb{Z}$ are trivial.

Now, moving the line of cubes $S_{0j0}$, $j \in \mathbb{Z}$ in the negative $x$-direction,
 and considering the line of cubes with bases $S_{-1j0}$, $j \in \mathbb{Z}$, we get a similar picture except the green lines on the front face are all known to be trivial, and hence all edges on this line of cubes are trivial. Continuing in this way we find that the edges on the line of cubes $S_{ij0}$, $i , j \in \mathbb{Z}$ are trivial.

Now, moving the line of cubes $S_{0j0}$, $j \in \mathbb{Z}$ in the positive  $z$-direction and considering the line of cubes with bases $S_{0j1}$, $j \in \mathbb{Z}$, we get a similar picture except the red lines on the bottom face are all known to be trivial, and hence all edges on this line of cubes are trivial.  Continuing in this way we find that the edges on the line of cubes $S_{0jk}$, $k \in \mathbb{N}, j \in \mathbb{Z}$ are trivial.

Now, moving the line of cubes $S_{0j0}$, $j \in \mathbb{Z}$ in the negative
$z$-direction and considering the line of cubes with bases $S_{0j-1}$, $j \in \mathbb{Z}$, we get a similar picture except the red lines on the top face are all known to be trivial, and hence all edges on this line of cubes are trivial. Continuing in this way we find that the edges on the line of cubes $S_{0jk}$, $j, k \in \mathbb{Z}$ are trivial.

Since every cube with base $S_{ijk}$ is in some line of cubes of some $x$- or $z$-translate of our initial line of cubes it follows that all edges in the skeleton are trivial in the fundamental group. This estalishes our claim.
\end{enumerate}
\end{examples}

\section{Relationship with first Homology group} \label{sec:h1}

Full versions of the following defintions may be found in \cite[\S 3]{KPS3}.
Let $X$ be a set. We write $\mathbb{Z} X$ for the free abelian group generated by $X$. For a $k$-graph $\Lambda$, set $C_0 ( \Lambda ) = \mathbb{Z} \Lambda^0$, $C_1 ( \Lambda ) = \mathbb{Z} \Lambda^{e_1}  \oplus \cdots \oplus \mathbb{Z} \Lambda^{e_k}$ and $C_2 ( \Lambda ) = \oplus_{1\le i<j\le k} \mathbb{Z} \Lambda^{e_i+e_j}$. 

Let $\partial^\Lambda_1 : C_1 ( \Lambda ) \to C_0 ( \Lambda )$ be the  homomorphism determined by $\partial^\Lambda_1 (\lambda) = s(\lambda)-r(\lambda)$.  Define $\partial^\Lambda_2 : C_2 ( \Lambda ) \to C_1 ( \Lambda )$ as follows. Suppose $\lambda \in \Lambda^{e_i+e_j}$ where $1\le i<j\le k$. Factorise $\lambda =f_1 g_1=g_2 f_2$ where $f_r \in  \Lambda^{e_{i}}$ and $g_r \in \Lambda^{e_{j}}$ for $r=1,2$, then set 
$\partial_2^\Lambda (\lambda)=f_1+g_1-f_2-g_2$
and extend to a homomorphism from $C_2 ( \Lambda )$ to $C_1 ( \Lambda )$. Then $\partial_2^\Lambda \circ \partial_1^\Lambda =0$, and $H_0 (\Lambda) =\mathbb{Z} \Lambda^0 / \operatorname{Im} \partial^\Lambda_1$, $H_1 (\Lambda) = \operatorname{ker} \partial^\Lambda_1 /\operatorname{Im} \partial^\Lambda_2$.

Recall the following definitions from \cite[Definition 2.7, Definition 3.10]{KPS3}.

\begin{dfn}\label{rmk:trails}
Given $h = h_1^{m_1} \cdots h_n^{m_n} \in \Gg ( \Lambda )$, where $m_i = \pm 1$, define $t : \Gg ( \Lambda ) \to C_1 ( \Lambda )$ by  $t(h) = \sum_{i=1}^n m_i h_i \in C_1 ( \Lambda )$; then $t(h)$ is called a \emph{trail}. If $h$ is a circuit (that is $r(h)=s(h)$) then $t(h)$ is called a \emph{closed trail}. If $h$ is also simple (that is $s ( h_i^{m_i} ) \neq s ( h_j^{m_j} )$ for $i \neq j$),then $t(h)$ is called a \emph{simple closed trail}.
\end{dfn}

\goodbreak

\begin{prop} \label{prp:h1pi1}
Let $\Lambda$ be a connected $k$-graph. Then the map $t$ defined in Definition~\ref{rmk:trails} induces an isomorphism $\operatorname{Ab} \pi_1 ( \Lambda ) \cong H_1 ( \Lambda )$.
\end{prop}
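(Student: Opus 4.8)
The plan is to show that $t$ descends to a well-defined homomorphism $\operatorname{Ab}\pi_1(\Lambda) \to H_1(\Lambda)$ and to construct an explicit inverse. First I would observe that $t$ as defined on $\Gg(\Lambda)$ is compatible with concatenation: if $h = h_1^{m_1}\cdots h_n^{m_n}$ and $h' = h_1'^{m_1'}\cdots$, then $t(hh') = t(h) + t(h')$, once we check that cancellation of an adjacent pair $ee^{-1}$ changes neither side (it contributes $+e - e = 0$ to $t$). Thus $t:\Gg(\Lambda)\to C_1(\Lambda)$ is a well-defined functor into an abelian group. Restricting to $v\Gg(\Lambda)v = \pi_1(\Lambda,v)$ gives a homomorphism $\pi_1(\Lambda) \to C_1(\Lambda)$, and since the target is abelian it factors through $\operatorname{Ab}\pi_1(\Lambda)$.

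Next I would identify the image. A circuit $h$ based at $v$ has $t(h) \in \ker\partial_1^\Lambda$: each edge $e$ contributes $m_e(s(e) - r(e))$ to $\partial_1^\Lambda t(h)$, and telescoping along the circuit (using that $s$ and $r$ of consecutive edges in an undirected path match up, with signs accounted for by the $m_i$) yields $0$. So the image of $t$ lands in $\ker\partial_1^\Lambda$. Moreover the relations defining $\pi_1(\Lambda)$ via Theorem~\ref{thm:fgc} — namely $ef = gh$ whenever $(ef,gh)\in S^+$ — map under $t$ exactly to $e + f - g - h$, which is (up to sign) a generator $\partial_2^\Lambda(\lambda)$ of $\operatorname{Im}\partial_2^\Lambda$; here I use that $S^+$ is generated by the commuting squares $(f_1g_1, g_2f_2)$ coming from $\Cc_\Lambda$, which are precisely the factorisations used to define $\partial_2^\Lambda$. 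Hence $t$ kills these relations when we pass to $H_1(\Lambda) = \ker\partial_1^\Lambda/\operatorname{Im}\partial_2^\Lambda$, giving a well-defined homomorphism $\bar t : \operatorname{Ab}\pi_1(\Lambda) \to H_1(\Lambda)$.

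To build the inverse, fix a maximal spanning tree $T$ for $\Sk_\Lambda$ and use the presentations from Lemma~\ref{lem:directed graph fundamental group} and Theorem~\ref{thm:fgc}. Abelianising, $\operatorname{Ab}\pi_1(\Lambda)$ is the free abelian group on $\Sk_\Lambda^1 \setminus T^1$ modulo the relations $e + f = g + h$ for $(ef,gh)\in S^+$. On the other hand, for the underlying directed graph one has $H_1(\Sk_\Lambda) = \ker\partial_1^\Lambda$ (with $\partial_2 = 0$), and the standard spanning-tree argument identifies this with the free abelian group on $\Sk_\Lambda^1\setminus T^1$ via $z \mapsto t(\zeta_z)$ (in the notation of Remark~\ref{rmk: fundamental group ntn}): indeed every element of $\ker\partial_1^\Lambda$ is uniquely a combination of the $\zeta_z$ after reducing its tree-edge coordinates. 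Passing to the quotient by $\operatorname{Im}\partial_2^\Lambda$ on one side and by the $S^+$-relations on the other, these identifications match — both quotients impose exactly the relations $e + f - g - h = 0$ — so the map $z \mapsto [\zeta_z]$ defines an inverse to $\bar t$. Therefore $\bar t$ is an isomorphism.

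The main obstacle I anticipate is bookkeeping the signs and the spanning-tree reduction carefully: one must verify that the assignment $\ker\partial_1^\Lambda \to \operatorname{Ab}\pi_1(\Lambda)$, $c \mapsto [\,\text{word in the }\zeta_z\text{ determined by the non-tree coordinates of }c\,]$, is well-defined (independent of how one writes $c$) and is a genuine two-sided inverse, rather than merely a one-sided one. This is where the explicit presentation from Theorem~\ref{thm:fgc} does the real work, so it is worth stating clearly that $S^+$ is generated by the square relations and that abelianisation turns the noncommutative relation $ef = gh$ into the additive relation $e + f = g + h$, matching $\partial_2^\Lambda$ on the nose.
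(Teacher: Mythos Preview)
Your proposal is correct and follows essentially the same approach as the paper: both arguments use the spanning-tree presentation of $\pi_1(\Lambda)$ from Theorem~\ref{thm:fgc} and identify the $S^+$ relations with the generators of $\operatorname{Im}\partial_2^\Lambda$. The only organisational difference is that the paper shows the induced map $t':\pi_1(\Lambda)\to H_1(\Lambda)$ is surjective (citing \cite[Proposition~3.15]{KPS3} for $\ker\partial_1^\Lambda = t(\pi_1(\Sk_\Lambda,v))$) and then asserts that a routine calculation gives $\ker t'$ equal to the commutator subgroup, whereas you abelianise first and build an explicit inverse by matching presentations; these are two ways of packaging the same computation. One small point of hygiene: your sentence ``$t:\Gg(\Lambda)\to C_1(\Lambda)$ is a well-defined functor'' is premature, since $\Gg(\Lambda)$ already has $S^+$ quotiented out and $t$ is only well-defined into $C_1(\Lambda)/\operatorname{Im}\partial_2^\Lambda$ at that stage---you handle this correctly two paragraphs later, but it would be cleaner to write $t:\Gg(\Sk_\Lambda)\to C_1(\Lambda)$ first and then descend.
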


\begin{proof}
Fix $v \in \Lambda^0$, then $\pi_1(\Lambda) \cong \pi_1(\Sk_\Lambda,v)/ S^+$ by~\cite{KKQS}. Fix a maximal spanning tree $T \subset \Sk_\Lambda$, then $\pi_1(\Sk_\Lambda,v) = \langle \zeta_e \mid e \in E^1 \setminus T^1 \rangle$ (see \cite{St} for example). Then $t: \pi_1(\Sk_\Lambda,v) \to C_1(\Lambda)$ is a homomorphism. Since $t$ sends simple reduced circuits to simple closed trails,  \cite[Proposition~3.15]{KPS3} implies that $\ker(\partial_1^\Lambda) = t(\pi_1(\Sk_\Lambda,v))$. As $t ( \zeta_e \zeta_f ) - t ( \zeta_g \zeta_h ) = e+f-g-h \in \operatorname{Im} \partial_2^\Lambda$ whenever if $(ef,gh) \in S^+$, $t$ descends to a homomorphism $t':\pi_1(\Lambda) \to H_1(\Lambda)$ which maps $[a]$ to $[t(a)]$ for $a \in \pi_1 ( \Sk_\Lambda ,v)$. Routine calculation then shows that $\ker t'$ is the commutator subgroup of $\pi_1(\Lambda)$,  so 
$t$ is an isomorphism from $\operatorname{Ab}\pi_1(\Lambda)$, the abelianisation of $\pi_1 ( \Lambda)$ to $H_1(\Lambda)$.
\end{proof}

\begin{example} \label{ex:isklein}
Recall the $2$-graph $\Lambda$ from \cite[Example 5.7]{KPS3}
\[
\begin{tikzpicture}[scale=1.5]
    \node[inner sep=0.5pt, circle] (sw) at (-1,-1) {$x$};
    \node[inner sep=0.5pt, circle] (w) at (-1,0) {$v$};
    \node[inner sep=0.5pt, circle] (nw) at (-1,1) {$x$};
    \node[inner sep=0.5pt, circle] (s) at (0,-1) {$w$};
    \node[inner sep=0.5pt, circle] (m) at (0,0) {$u$};
    \node[inner sep=0.5pt, circle] (n) at (0,1) {$w$};
    \node[inner sep=0.5pt, circle] (se) at (1,-1) {$x$};
    \node[inner sep=0.5pt, circle] (e) at (1,0) {$v$};
    \node[inner sep=0.5pt, circle] (ne) at (1,1) {$x$};
    \draw[-latex, blue] (sw)--(s) node[pos=0.5, above, black] {$b$};
    \draw[-latex, blue] (se)--(s) node[pos=0.5, above, black] {$a$};
    \draw[-latex, blue] (w)--(m) node[pos=0.5, above, black] {$c$};
    \draw[-latex, blue] (e)--(m) node[pos=0.5, above, black] {$d$};
    \draw[-latex, blue] (nw)--(n) node[pos=0.5, above, black] {$b$};
    \draw[-latex, blue] (ne)--(n) node[pos=0.5, above, black] {$a$};
    \draw[-latex, red, dashed] (sw)--(w) node[pos=0.5, right, black] {$f$};
    \draw[-latex, red, dashed] (nw)--(w) node[pos=0.5, right, black] {$e$};
    \draw[-latex, red, dashed] (s)--(m) node[pos=0.5, right, black] {$h$};
    \draw[-latex, red, dashed] (n)--(m) node[pos=0.5, right, black] {$g$};
    \draw[-latex, red, dashed] (se)--(e) node[pos=0.5, right, black] {$e$};
    \draw[-latex, red, dashed] (ne)--(e) node[pos=0.5, right, black] {$f$};
    \node at (-0.5, -0.5) {$\gamma$};
    \node at (-0.5, 0.5) {$\alpha$};
    \node at (0.5, -0.5) {$\beta$};
    \node at (0.5, 0.5) {$\delta$};
\begin{scope}[xshift=3cm, yshift=-0.78cm, scale=1.5]
    \node[inner sep=0.5pt, circle] (u) at (0,0) {$u$};
    \node[inner sep=0.5pt, circle] (v) at (1,0) {$v$};
    \node[inner sep=0.5pt, circle] (w) at (0,1) {$w$};
    \node[inner sep=0.5pt, circle] (x) at (1,1) {$x$};
    \draw[blue,-latex,out=160, in=20] (v) to node[pos=0.5,above, black] {$c$} (u);
    \draw[blue,-latex,out=200, in=340] (v) to node[pos=0.5,below, black] {$d$} (u);
    \draw[blue,-latex,out=160, in=20] (x) to node[pos=0.5,above, black] {$a$} (w);
    \draw[blue,-latex,out=200, in=340] (x) to node[pos=0.5,below, black] {$b$} (w);
    \draw[red, dashed, -latex, out=290, in=70] (w) to node[pos=0.5,right, black] {$h$} (u);
    \draw[red, dashed, -latex, out=250, in=110] (w) to node[pos=0.5,left, black] {$g$} (u);
    \draw[red, dashed, -latex, out=290, in=70] (x) to node[pos=0.5,right, black] {$f$} (v);
    \draw[red, dashed, -latex, out=250, in=110] (x) to node[pos=0.5,left, black] {$e$} (v);
\end{scope}
\end{tikzpicture}
\]
which has the same homology as the Klein bottle. However, as we shall see, it does have the same fundamental group, but with a quite different presentation to the one given in \cite[Example 3.13]{KKQS}. To see this chose spanning tree $T$ with $T^1=\{a,c,g\}$. By Theorem~\ref{thm:fgc} the fundamental group is generated by $\Lambda^{e_1} \cup \Lambda^{e_2}$ subject to the relations
\[
a=c=g=1 , \ gb=ce , \ ga = df , \ hb = cf , \ ha = de ,
\]

\noindent which simplify to
\[
b=e , \ 1 = df , \ hb = f , \ h = de .
\]

\noindent
Eliminating $b$ and simplifying further we have
\begin{equation} \label{eq:notklein}
\pi_1 ( \Lambda ) = \langle  e, f , h : fh = e , he =f \rangle  
= \langle e, f : f^2 = e^2 \rangle   ,
\end{equation}

\noindent
is equal to the fundamental group of the Klein bottle, $\langle a,b : aba=b \rangle$. To see this set $e=ab$ and $f=b$, then
\[
e^2 = (ab)(ab) = (aba)b = (b)(b) = b^2 = f^2 .
\]

\noindent
A slightly easier calculation shows that in the case $n=2$, the $2$-graph in \cite[Example 5.1]{KPS3} has the same fundamental group \eqref{eq:notklein} as $\Lambda$, which is not a suprise as it has the same topological realisation as $\Lambda$ (see \cite[Remark 5.9]{KPS3}). The presentation \eqref{eq:notklein} in abelian form is
\[
 \langle e , f : 2 (f-e) =0 \rangle . 
\]

\noindent
One sees that the abelianisation of $\pi_1 ( \Lambda )$ is $\mathbb{Z} \oplus \mathbb{Z} / 2 \mathbb{Z}$, the homology group of the Klein bottle, as stated in \cite[Example 5.7]{KPS3}.
\end{example}


\begin{thebibliography}{abcd} 

\bibitem{A-PCaHR} G.\ Aranda-Pino, J.\ Clark, A.\ an Huef, I.\ Raeburn, \emph{Kumjian-Pask algebras of higher-rank graphs}, Trans. Amer. Math. Soc., \textbf{365} (2013), 3613--3641.


\bibitem{CKSS} T. Carlsen, S. Kang, J. Shotwell and A. Sims, \emph{The primitive ideals of the Cuntz-Krieger algebra of a row-finite higher-rank graph with no sources}, J.\ Funct.\ Anal.\ \textbf{266} (2014), 2570--2589.


\bibitem{CFaH} L.\ Orloff Clark,  C.\ Flynn and A.\ an Huef, \emph{Kumjian-Pask algebras of locally convex higher-rank graphs}, J.\ Algebra \textbf{399} (2014), 445--474. 



\bibitem{E} D.\ G.\ Evans, \textit{On the K-theory of higher-rank graph C*-algebras}, New York J.\ Math., \textbf{14} (2008), 1--31.

\bibitem{FGJKP2} C.\ Farsi, E.\ Gillaspy, P.E.T.\ Jorgensen, S.\ Kang and J.A.\  Packer. \emph{Purely atomic representations of higher-rank graph $C^*$-algebras,} Integral Equations Operator Theory \textbf{90} (2018), Paper No. 67, 26 pp. 

\bibitem{FPS2009} C. Farthing, D. Pask and A. Sims, \emph{Crossed products of $k$-graph
    $C^\ast$-algebras by $\mathbb{Z}^l$}, Houston J. Math. {\bf 35} (2009), 903--933.


\bibitem{aHLRS} A. an Huef, M. Laca, I. Raeburn and A. Sims, \textit{KMS states on $C^*$-algebras associated to higher-rank graphs},  J.\ Math.\ Anal.\ Appl., \textbf{405} (2013), 388--399.

\bibitem{HRSW} {R. Hazlewood, I. Raeburn, A. Sims and S.B.G. Webster}, \emph{Remarks on some fundamental results about higher-rank graphs and their $C^*$-algebras},
Proc. Edinburgh Math. Soc. {\bf 56} (2013), 575--597.

\bibitem{KKQS} S. Kaliszewski, A. Kumjian, J. Quigg, and A. Sims, \emph{Topological realizations and fundamental groups of higher-rank graphs} Proc.\ Edinb.\ Math.\ Soc., \textbf{59} (2016), 143--168.



\bibitem{KP1}{A. Kumjian and D. Pask}, \emph{Higher-rank graph $C^*$-algebras}, New York J. Math {\bf 6} (2000), 1--20.

\bibitem{KPS3}{A. Kumjian, D. Pask, and A. Sims}, \emph{Homology for higher-rank graphs and twisted $C^*$-algebras}. {\it J.\ Funct.\ Anal.}, {\bf 263} (2012), 1539--1574.

\bibitem{KPS4}{A. Kumjian, D. Pask and A. Sims }, \emph{On twisted higher-rank graph $C^*$-algebras}, Trans.\ Amer.\ Math.\ Soc., \textbf{367} (2015), 5177--5216. 

\bibitem{KPS5}{A. Kumjian, D. Pask, and A. Sims }, \emph{On the $K$-theory of twisted higher-rank graph $C^*$-algebras}, J.\ Math.\ Anal.\ Appl.\  {\bf 401} (2013), 104--113.


\bibitem{PQR1}{D. Pask, J. Quigg and I. Raeburn}, \emph{Fundamental groupoids of $k$-graphs}, New York J. Math. {\bf 10} (2004), 195--207.


\bibitem{RSY1}{I. Raeburn, A. Sims and T. Yeend}, \emph{Higher-rank graphs and their $C^*$-algebras}, Proc. Edinb. Math. Soc. {\bf 46} (2003), 99--115.

\bibitem{Raeburn2005} I. Raeburn, Graph algebras, Published for the Conference Board of the Mathematical Sciences, Washington, DC, 2005, vi+113.

\bibitem{RS} G. Robertson and T. Steger, \textit{Affine buildings, tiling systems and higher rank Cuntz-Krieger algebras}, J.\ Reine Angew.\ Math.,  \textbf{513} (1999), 115--144.

\bibitem{St}J.C. Stillwell, Classical topology and combinatorial group theory, \textit{Graduate Texts in Mathematics}, {\bf 72}, Springer-Verlag, New York-Berlin, 1980.

\bibitem{Y1}  D.\ Yang. \emph{Endomorphisms and modular theory of $2$-graph $C^*$-algebras,} Indiana Univ.\ Math.\ J., \textbf{59} (2010) 495--520.



\end{thebibliography}
\end{document}